\documentclass[12pt]{article}
\usepackage[T2A]{fontenc}
\usepackage[cp1251]{inputenc}
\usepackage[english]{babel}
\usepackage{amsmath,latexsym,amsthm,amsfonts,tipa}
\usepackage{amssymb,amscd,graphpap}
\textwidth=17cm \textheight=26cm \hoffset=-2cm \voffset=-3cm

\newcommand\NN{{\mathbb N}}

\newcommand\w{{\omega}}
\newcommand\kk{{\varkappa}}

\newcommand\PP{{\mathcal P}}

\newtheorem{theorem}{Theorem}[section]
\newtheorem{Th}{Theorem}[section]
\newtheorem{Lm}{Lemma}[section]
\newtheorem{Qs}{Question}[section]
\theoremstyle{definition}
\newtheorem{Ex}{Example}[section]
\newtheorem{Rm}{Remark}[section]

\begin{document}

\title{Partitions of groups into large subsets}
\author{Igor~Protasov, Sergii~Slobodianiuk}
\date{}
\maketitle

\begin{abstract}
Let $G$ be a group and let $\kk$ be a cardinal.
A subset $A$ of $G$ is called left (right) $\kk$-large if there exists a subset $F$ of $G$ such that $|F|<\kk$ and $G=FA$ ($G=AF$).
We say that $A$ is $\kk$-large if $A$ is left and right $\kk$-large.
It is known that every infinite group $G$ can be partitioned into countably many $\aleph_0$-large subsets.
On the other hand, every amenable (in particular Abelian) group $G$ cannot be partitioned into $>\aleph_0$ $\aleph_0$-large subsets.

We prove that every infinite group $G$ of cardinality $\kk$ can be partitioned into $\kk$ left-$\aleph_1$-large 
subsets and every free group $F_\kk$ in the infinite alphabet $\kk$ can be partitioned into $\kk$ $4$-large subsets.
\

{\bf 2010 AMS Classification}: 03E05, 05A18, 20B07

\

{\bf Keywords}: partitions and filtrations of groups, $\kk$-large subsets of groups.
\end{abstract}

\section{Introduction}
Let $G$ be a group, $\kk$ be a cardinal, $[G]^{<\kk}$ denotes the family of all subsets of $G$ of cardinality $<\kk$.
A subset $A$ of $G$ is called
\begin{itemize}
\item{} {\em left (right)} $\kk$-large if there exists $F\in[G]^{<\kk}$ such that $G=FA$ ($G=AF$);
\item{} {\em $\kk$-large} if $A$ is left and right $\kk$-large.
\end{itemize}

We note that $A$ is left $\kk$-large if and only if $A^{-1}$ is right $\kk$-large.
In the dynamical terminology \cite[3, p.101]{b3}, the left $\aleph_0$-large subsets are known under the name {\em syndetic subsets}.

In \cite{b2}, Bella and Malykhin asked whether every infinite group $G$ can be partitioned into two $\aleph_0$-large subsets.
This question was answered in \cite{b5} (see also \cite[Theorem 3.12]{b8}): 
$G$ can be partitioned into countably many $\aleph_0$-large subsets.

If $G$ is amenable (in particular, $G$ is Abelian) and $\mu$ is a left invariant Banach measure on $G$ then $\mu(A)>0$ for every left $\aleph_0$-large subset $A$ of $G$.
It follows that $G$ cannot be partitioned into $>\aleph_0$ left $\aleph_0$-large subsets.

On the other hand (see \cite[Theorem 2.4]{b7} or \cite[Theorem 12.11]{b8}), every infinite group $G$ can be partitioned into $\kk$ left $\kk$-large subsets for each infinite cardinal $\kk\le|G|$.
In this connection, we asked \cite[Question 12.6]{b8} whether an infinite Abelian group $G$ of cardinality $\aleph_2$ can be partitioned into $\aleph_2$ $\aleph_1$-large subsets.

In section~\ref{s2} of this paper, we prove that every infinite group $G$ of cardinality $\kk$ can be partitioned 
into $\kk$ left $\aleph_1$-large subsets.

In section~\ref{s3} we partition the free group $F_\kk$ in the infinite alphabet $\kk$ into $\kk$ left $3$-large 
subsets and into $\kk$ $4$-large subsets.

In section~\ref{s4} we consider two alternative examples of partitions of $G$-spaces into large subsets and conclude 
the paper with some comments and open problems in Section~\ref{s5}.

\section{Partitions and filtrations}\label{s2}
Let $G$ be an infinite group with the identity $e$, $\kk$ be an infinite cardinal.
A family $\{G_\alpha:\alpha<\kk\}$ of subgroups of $G$ is called a {\em filtration} if the following conditions hold
\begin{itemize}
\item[(1)] $G_0=\{e\}$ and $G=\bigcup_{\alpha<\kk}G_\alpha$;
\item[(2)] $G_\alpha\subset G_\beta$ for all $\alpha<\beta<\kk$;
\item[(3)] $\bigcup_{\alpha<\beta}G_\alpha=G_\beta$ for each limit ordinal $\beta<\kk$.
\end{itemize}
Clearly, a countable group $G$ admits a filtration if and only if $G$ is not finitely generated.
Every uncountable group $G$ of cardinality $\kk$ admits a filtration satisfying the additional condition $|G_\alpha|<\kk$ for each $\alpha<\kappa$.

Following \cite{b6}, for each $0<\alpha<\kappa$, we decompose $G_{\alpha+1}\setminus G_\alpha$ into right cosets by 
$G_\alpha$ and choose some system $X_\alpha$ of representatives so $G_{\alpha+1}\setminus G_\alpha=G_\alpha X_\alpha$.
We take an arbitrary element $g\in G\setminus\{e\}$ and choose the smallest subgroup $G_\alpha$ with $g\in G_\alpha$.
By $(3)$, $\alpha=\alpha_1+1$ for some ordinal $\alpha_1<\kk$. 
Hence, $g\in G_{\alpha+1}\setminus G_{\alpha_1}$ and there exist $g_1\in G_{\alpha_1}$ and $x_{\alpha_1}\in X_{\alpha_1}$ such that $g=g_1x_{\alpha_1}$.
If $g_1\neq e$, we choose the ordinal $\alpha_2$ and elements $g_2\in G_{\alpha_2+1}\setminus G_{\alpha_2}$ and $x_{\alpha_2}\in X_{\alpha_2}$ such that $g_1=g_2x_{\alpha_2}$.
Since the set of ordinals $\{\alpha:\alpha<\kk\}$ is well-ordered, after finite number $s(g)$ of steps, we get the representation
$$g=x_{\alpha_{s(g)}}x_{\alpha_{s(g)-1}}\dots x_{\alpha_2}x_{\alpha_1},\text{ }x_{\alpha_i}\in X_{\alpha_i}.$$
We note that this representation is unique and put
$$\gamma_1(g)=\alpha_1,\text{ }\gamma_2(g)=\alpha_2,\text{ }\dots,\gamma_{s(g)}(g)=\alpha_{s(g)}.$$
\begin{theorem}\label{t2.1}
Every infinite group $G$ of cardinality $\kk$ can be partitioned into $\kk$ left $\aleph_1$-large subsets.
\end{theorem}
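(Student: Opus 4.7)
The case $\kk=\aleph_0$ is trivial: in a countable group every nonempty subset is left $\aleph_1$-large, so any partition of $G$ into $\aleph_0$ nonempty pieces works. I therefore assume $\kk$ is uncountable, fix a filtration $\{G_\alpha:\alpha<\kk\}$ with $|G_\alpha|<\kk$, and exploit the unique representation $g=x_{\gamma_{s(g)}(g)}\cdots x_{\gamma_1(g)}$ supplied by the filtration.

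The key computation from uniqueness is: for $w\in X_\beta$ and $y\in G_\beta$ one has $wy\in L_\beta:=G_\beta X_\beta=\{g\in G:\gamma_1(g)=\beta\}$, because $wy\in G_{\beta+1}$ and $wy\in G_\beta$ would give $w=(wy)y^{-1}\in G_\beta$, contradicting $w\in X_\beta$. Thus left multiplication by any $w\in X_\beta$ pushes every $y$ of level $\gamma_1(y)<\beta$ into the single level $L_\beta$. On the other hand any left $\aleph_1$-large $A\subseteq G$ satisfies $|A|\ge\kk$ (from $|G|\le\aleph_0\cdot|A|$), while $|L_\beta|\le|G_{\beta+1}|<\kk$; so no single level can be a class of the desired partition, and the classes must span many levels.

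The natural attempt is to fix a partition $\kk=\bigsqcup_{\alpha<\kk}S_\alpha$, set $A_\alpha=\bigcup_{\beta\in S_\alpha}L_\beta$, pick one distinguished $w_\beta\in X_\beta$ for each $\beta$, and take $F_\alpha=\{w_\beta^{-1}:\beta\in S_\alpha\}$. By the key identity, $w_\beta y\in L_\beta\subseteq A_\alpha$ whenever $\gamma_1(y)<\beta\in S_\alpha$, so $y\in w_\beta^{-1}A_\alpha\subseteq F_\alpha A_\alpha$; hence $F_\alpha A_\alpha=G$ as soon as each $S_\alpha$ is cofinal in $\kk$. When $\mathrm{cf}(\kk)=\aleph_0$ this is easily arranged: distribute a cofinal $\omega$-sequence across the filtration blocks to partition $\kk$ into $\kk$ countable cofinal subsets, giving countable $F_\alpha$ and finishing the argument.

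The main obstacle is the case $\mathrm{cf}(\kk)>\aleph_0$, which already arises for $\kk=\aleph_1$: no countable subset of $\kk$ is cofinal, so any countable $F_\alpha$ lies inside some $G_\delta$ with $\delta<\kk$, and for $y$ with $\gamma_1(y)\ge\delta$ every $f^{-1}y$ sits at exactly the level $\gamma_1(y)$. Consequently $A_\alpha$ has to be a proper subset of $L_{\gamma_1(y)}$ whenever $\gamma_1(y)\notin S_\alpha$, and the countable $F_\alpha$ must translate $y$ within its level into $A_\alpha$. I expect the proof proceeds by refining each $L_\beta$—splitting it into its right cosets $G_\beta x$ for $x\in X_\beta$, and using the action of $X_\beta^{-1}$ by left multiplication to move between cosets—and distributing these finer pieces across the classes $A_\alpha$ along a carefully chosen assignment, so that countably many translations from a fixed $G_\delta$ witness $F_\alpha A_\alpha=G$ for each $\alpha$. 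Arranging this distribution so that every $F_\alpha$ remains countable while the covering condition holds uniformly in $\alpha$ is the combinatorial heart of the proof.
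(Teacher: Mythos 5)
Your setup (the filtration, the level sets $L_\beta=G_\beta X_\beta$, and the observation that left multiplication by $w\in X_\beta$ maps $G_\beta$ into $L_\beta$) matches the paper's machinery, and your treatment of $\mathrm{cf}(\kk)=\aleph_0$ via countable cofinal index sets can be made to work. But the proposal has a genuine gap: the case $\mathrm{cf}(\kk)>\aleph_0$ --- which is exactly the hard case, already at $\kk=\aleph_1$ --- is not proved. You correctly diagnose why the ``union of whole levels'' partition cannot succeed there, but then only state that you \emph{expect} a proof by refining each $L_\beta$ into the cosets $G_\beta x$, $x\in X_\beta$, and distributing these pieces by some ``carefully chosen assignment''; no such assignment is produced, and the mechanism you gesture at (moving between cosets inside a single level by left multiplication by $X_\beta^{-1}$) is not how the difficulty is resolved in the paper, nor is it clear it can be pushed through with countable translating sets.

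The missing idea is this. Instead of deciding membership of $g$ by its top level $\gamma_1(g)$ alone, use the whole finite set $\Gamma(g)$ of nonzero levels occurring in the canonical representation $g=g_1x_{\gamma_m}\cdots x_{\gamma_1}$ (with $g_1\in G_1$, where the filtration is chosen so that $|G_1|=\aleph_0$), and let the $G_1$-component $g_1$ act as a \emph{pointer}: fixing a bijection $\pi:G_1\to\NN$, put $g\in A_\alpha$ iff $\alpha\in\Gamma(g)$ and $\gamma_{\pi(g_1)}=\alpha$. Left multiplication by $h\in G_1$ replaces the pointer $g_1$ by $hg_1$ without changing $\Gamma(g)$, so the countable set $G_1$ of translations already moves $g$ into $A_\alpha$ whenever $\alpha\in\Gamma(g)$; and if $\alpha\notin\Gamma(g)$, one additional left translation by a fixed $a_\alpha^{-1}$ with $a_\alpha\in X_\alpha$ inserts $\alpha$ into $\Gamma$. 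Hence $F_\alpha=\{e,a_\alpha\}G_1$ is countable and $G=F_\alpha A_\alpha$, uniformly for all uncountable $\kk$ with no cofinality case split. Your remark that each left $\aleph_1$-large class must meet $\kk$ many levels is consistent with this construction: $A_\alpha$ meets every level containing $\alpha$ in its $\Gamma$-set, but only in the thin slice of elements whose $G_1$-pointer selects $\alpha$.
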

\begin{proof}
If $G$ is countable, the statement is evident because each singleton is $\aleph_1$-large.
Assume that $\kk>\aleph_0$ and fix some filtration $\{G_\alpha:\alpha<\kk\}$ of $G$ such that $|G_1|=\aleph_0$.
Given any $g\in G\setminus\{e\}$, we rewrite the canonical representation $g=x_{\gamma_n}\dots x_{\gamma_1},$
in the following $$g=g_1x_{\gamma_m}\dots x_{\gamma_1},$$ $g_1\in G_1$, $0<\gamma_m<\dots<\gamma_1$. 
Here $g_1=e$ and $m=n$ if $\gamma_n>0$, and $g_1=x_{\gamma_n}$ and $m=n-1$ if $\gamma_n=0$.
We put $\Gamma(g)=\{\gamma_1,\dots,\gamma_m\}$ and fix an arbitrary bijection $\pi:G_1\to\NN$.

We define a family $\{A_\alpha:0<\alpha<\kk\}$ of subsets of $G$ by the following rule: 
$g\in A_\alpha$ if and only if $\alpha\in\Gamma(g)$ and $\gamma_{\pi(g_1)}=\alpha$. Since the subsets 
$\{A_\alpha:0<\alpha<\kk\}$ are pairwise disjoint, it suffices to show that each $A_\alpha$ is left $\aleph_1$-large.
We take $a_\alpha\in X_\alpha$, put $F_\alpha=\{e,a_\alpha\}G_1$ and proof that $G=F_\alpha A_\alpha$.

Let $g\in G$ and $\alpha\in \Gamma(g)$. By the definition of $A_\alpha$, there exists $h\in G_1$ such that 
$hg\in A_\alpha$ so $g\in G_1A_\alpha$. If $\alpha\notin\Gamma(g)$ then $\alpha\in\Gamma(a_\alpha^{-1}g)$ so
$a_\alpha^{-1}g\in G_1A_\alpha$ and $g\in a_\alpha G_1 A_\alpha$.
\end{proof}

\section{Partitions of free groups}\label{s3}
For a cardinal $\kk$, we denote by $F_\kk$ the free group in the alphabet $\kk$.
Given any  $g\in F_\kk\setminus\{e\}$ and $a\in\kk$, we write $\lambda(g)=a$ ($\rho(g)=a$) if the first (the last)
letter in the canonical representation of $g$ is either $a$ or $a^{-1}$.
\begin{Lm}\label{l3.1} Suppose that a group $G$ is a quotient of a group $H$, $f:H\to G$ is a quotient mapping. If $\PP$ is a partition of $G$ into $\kk$ $\lambda$-large subsets then $\{f^{-1}(P):P\in\PP\}$ is a partition of $H$ into $\kk$ $\lambda$-large subsets. \end{Lm}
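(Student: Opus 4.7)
The plan is to observe that pulling back a partition through a group homomorphism automatically preserves the partition structure, and that the ``$\lambda$-large'' property transfers by lifting witness sets. First I would note that since $f$ is surjective, $P\mapsto f^{-1}(P)$ sends pairwise disjoint nonempty subsets of $G$ to pairwise disjoint nonempty subsets of $H$, and their union is $H$; hence $\{f^{-1}(P):P\in\PP\}$ is a partition of $H$ of cardinality $\kk$. No work is needed for this part beyond observing bijectivity of the induced map on the partition.

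The substantive step is lifting witnesses of $\lambda$-largeness. Fix $P\in\PP$ and pick $F,F'\in[G]^{<\lambda}$ with $G=FP$ and $G=PF'$. Using the axiom of choice, for each $x\in F$ select a preimage $\widetilde{x}\in f^{-1}(x)$, and set $E=\{\widetilde{x}:x\in F\}$; define $E'\subset H$ analogously from $F'$. Then $|E|\le|F|<\lambda$ and $|E'|\le|F'|<\lambda$.

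To check that $H=E\cdot f^{-1}(P)$, take an arbitrary $h\in H$. Then $f(h)\in G=FP$, so $f(h)=xp$ for some $x\in F$ and $p\in P$. Because $f$ is a homomorphism, $f(\widetilde{x}^{-1}h)=x^{-1}f(h)=p\in P$, so $\widetilde{x}^{-1}h\in f^{-1}(P)$ and therefore $h\in\widetilde{x}\,f^{-1}(P)\subseteq E\cdot f^{-1}(P)$. The symmetric argument, using $G=PF'$, yields $H=f^{-1}(P)\cdot E'$. This shows that each $f^{-1}(P)$ is left and right $\lambda$-large.

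I do not expect a real obstacle here; the proof is a direct pullback argument. The only point worth flagging is that the key identity $f(\widetilde{x}^{-1}h)=x^{-1}f(h)$ uses that $f$ is a group homomorphism, not merely a surjection of sets — so the same statement would fail for an arbitrary surjective map between sets equipped with a $G$-action unless one additionally assumes equivariance when choosing the lifts $\widetilde{x}$.
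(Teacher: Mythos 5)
Your proposal is correct and follows essentially the same route as the paper: lift each element of the witness set $F$ to a preimage in $H$ and check that the lifted set together with $f^{-1}(P)$ covers $H$, using that $f$ is a homomorphism. The paper states this in one line; your version merely spells out the verification $f(\widetilde{x}^{-1}h)=x^{-1}f(h)\in P$ explicitly.
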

\begin{proof} For each $g\in G$, we choose some element $h_g\in f^{-1}(g)$. If $G=XP$ or $G=PX$ then $H=\{h_x:x\in X\}f^{-1}(P)$ or $H=f^{-1}(P)\{h_x:x\in X\}$. \end{proof}
\begin{Th}\label{t3.1} 
For any infinite cardinal $\kk$, the following statements hold

$(i)$ $F_\kk$ can be partitioned into $\kk$ left $3$-large subsets;

$(ii)$ $F_\kk$ can be partitioned into $\kk$ $4$-large subsets.
\end{Th}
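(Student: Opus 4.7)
For (i), my plan is to partition $F_\kk$ by the first letter of the reduced word. For each $\alpha<\kk$ put $A_\alpha=\{g\in F_\kk\setminus\{e\}:\lambda(g)=\alpha\}$, and throw $e$ into $A_0$. Taking $F_\alpha=\{e,x_\alpha\}$ gives left $3$-largeness: if $\lambda(g)=\alpha$ then $g\in e\cdot A_\alpha$, and otherwise (including $g=e$) the word $x_\alpha^{-1}g$ begins with $x_\alpha^{-1}$ without cancellation, so $x_\alpha^{-1}g\in A_\alpha$ and $g\in x_\alpha\cdot A_\alpha$.

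For (ii), the central difficulty is asymmetry: left multiplication by a bounded-length $f$ preserves $\rho(g)$ up to cancellation of size at most $|f|$, so any partition depending only on $\lambda$ fails to be right-large, and dually for $\rho$. My plan is therefore to partition by a joint invariant of $(\lambda(g),\rho(g))$ chosen so that, on each side, three well-placed translates of each class cover all of $F_\kk$.

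The joint invariant I propose is built from a fixed bijection $\kk\leftrightarrow\kk\times\{0,1\}$: write each $\alpha<\kk$ as $(\alpha_1,j_\alpha)$, so each $i<\kk$ has two lifts $i^{(0)},i^{(1)}\in\kk$. Set $\sigma(a,b)=a_1$ when $j_a=j_b$ and $\sigma(a,b)=b_1$ when $j_a\neq j_b$, and let $A_i=\{g\in F_\kk\setminus\{e\}:\sigma(\lambda(g),\rho(g))=i\}$, with $e\in A_0$. The combinatorial heart of the construction is the column-covering property: for each $\beta<\kk$ at least one of $\sigma(i^{(0)},\beta),\sigma(i^{(1)},\beta)$ equals $i$ (whichever matches $j_\beta$), and symmetrically for rows. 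Choosing $F_i=F_i'=\{e,x_{i^{(0)}},x_{i^{(1)}}\}$, for generic $g$ the three left-translates $g,\,x_{i^{(0)}}^{-1}g,\,x_{i^{(1)}}^{-1}g$ have types $(\lambda(g),\rho(g)),(i^{(0)},\rho(g)),(i^{(1)},\rho(g))$, so by the column property one of them lies in $A_i$; the right-largeness via $g\cdot x_{i^{(j)}}^{-1}$ is entirely analogous via the row property.

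The main obstacle is the interplay with cancellation and with short $g$. Cancellation between $x_{i^{(j)}}^{-1}$ and $g$ requires $g$ to begin with $x_{i^{(j)}}$, and since $i^{(0)}\neq i^{(1)}$ at most one translator cancels, so the other still produces its predicted type $(i^{(1-j)},\rho(g))$. If even that predicted type fails to satisfy $\sigma(\cdot,\rho(g))=i$, then by the dichotomy above $\sigma(i^{(j)},\rho(g))=i$; but $\lambda(g)=i^{(j)}$, so $g$ itself already lies in $A_i$ and the choice $f=e$ succeeds. The remaining short cases $g=e$ and $g=x_\beta^{\pm1}$ are finished by direct computation with $\sigma$: $x_{i^{(0)}}^{-1}$ has type $(i^{(0)},i^{(0)})\in\sigma^{-1}(i)$, and for each $\beta$ at least one of the length-two words $x_{i^{(0)}}^{-1}x_\beta^{\pm1},x_{i^{(1)}}^{-1}x_\beta^{\pm1}$ is reduced with type landing in $\sigma^{-1}(i)$.
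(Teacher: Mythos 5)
Your proposal is correct and follows essentially the same route as the paper: part $(i)$ is the identical first-letter partition, and in part $(ii)$ your classes $A_i=\{g:\sigma(\lambda(g),\rho(g))=i\}$ coincide with the paper's $P_\alpha=L_\alpha\cup R_\alpha$ once the pairing $\{x_\alpha,y_\alpha\}$ of the alphabet is identified with your two lifts $i^{(0)},i^{(1)}$ (the case $j_{\lambda(g)}=j_{\rho(g)}$ gives $L_\alpha$, the case $j_{\lambda(g)}\neq j_{\rho(g)}$ gives $R_\alpha$). Your explicit cancellation analysis is a more detailed version of the paper's one-line verification that $\{e,x_\alpha,y_\alpha\}g\cap L_\alpha\neq\varnothing$ and $g\{e,x_\alpha,y_\alpha\}\cap R_\alpha\neq\varnothing$.
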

\begin{proof}
$(i)$ For each $a\in\kk$, we put $P_a=\{g\in F_\kk\setminus\{e\}:\lambda(g)=a\}$ and note that $F_\kk=\{e,a\}P_a$.

$(ii)$ We partition $\kk$ into $2$-element  subsets $\kk=\bigcup_{\alpha<\kk}\{x_\alpha,y_\alpha\}$ and put $X=\{x_\alpha:\alpha<\kk\}$, $Y=\{y_\alpha:\alpha<\kk\}$.

For every $\alpha<\kk$, we denote 
$$L_\alpha=\{g\in F_\kk\setminus\{e\}:\lambda(g)=x_\alpha\Leftrightarrow \rho(g)\in X, \lambda(g)=y_\alpha\Leftrightarrow \rho(g)\in Y,\}$$
$$R_\alpha=\{g\in F_\kk\setminus\{e\}:\rho(g)=x_\alpha\Leftrightarrow \lambda(g)\in Y, \rho(g)=y_\alpha\Leftrightarrow \rho(g)\in X,\}$$
Then we put $P_\alpha=L_\alpha\cup R_\alpha$ and note that the subsets $P_\alpha:\alpha<\kk$ are pairwise disjoint.

Given any $g\in F_\kk$, we have
$${\{e,x_\alpha,y_\alpha\}g\cap L_\alpha\neq\varnothing},\text{ }g\{e,x_\alpha,y_\alpha\}\cap R_\alpha\neq\varnothing.$$
Hence, $L_\alpha$ is left $4$-large and $R_\alpha$ is right $4$-large, so $P_\alpha$ is $4$-large.
\end{proof}
\begin{Rm}
Let $G$ be a group, $X$ be a left $3$-large subset of $G$, $Y$ be a right $3$-large subset of $G$ and $X\cap Y=\varnothing$.
We show that $G=X\cup Y$. In particular each group can be partitioned in at most $2$ $3$-large subsets.

Assume the contrary, take any $g\in G\setminus (X\cup Y)$.By the assumptions $G=\{e,x\}X=Y\{e,y\}$ for some $x,y\in G$.
Then $x^{-1}g\in X$, $gy^{-1}\in Y$ so $x^{-1}gy^{-1}\in X$ and $x^{-1}gy^{-1}\in Y$ contradicting $X\cap Y=\varnothing$.
\end{Rm}
\begin{Th}
For a natural number $n\ge2$, the following statements hold

$F_n$ can be partitioned into $\aleph_0$ left $3$-large subsets;

$F_n$ can be partitioned into $\aleph_0$ $5$-large subsets.
\end{Th}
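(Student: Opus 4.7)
The plan is to mimic the proof of Theorem~3.1, replacing the letter alphabet by the countable alphabet of syllables $\{a_i^k:1\le i\le n,\ k\in\ZZ\setminus\{0\}\}$ arising from the free-product decomposition $F_n=\ZZ*\cdots*\ZZ$. Even when $n$ is finite this alphabet is countable, so the first-syllable invariant $\sigma(g)$ (the maximal power $a_i^k$ at the start of the reduced form of $g$) takes $\aleph_0$ values and can index the pieces of the partition. Because left multiplication by one syllable can merge with the leading syllable of $g$, a second translator is needed to force $g$ into a prescribed first-syllable class, and this is what pushes the largeness parameter from $3$ and $4$ up to $3$ and $5$.

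For (i), I would put $A_{i,k}=\{g\in F_n:\sigma(g)=a_i^k\}$ for each $i\in\{1,\ldots,n\}$ and $k\in\ZZ\setminus\{0\}$; together with $\{e\}$, absorbed into any one $A_{i,k}$, these form a partition of $F_n$ into $\aleph_0$ pieces. To prove $A_{i,k}$ is left $3$-large, choose any $j\ne i$ (possible since $n\ge 2$) and consider the two multipliers $a_i^k a_j$ and $a_i^k a_j^{-1}$. A case analysis on the first two letters of $g$ in reduced form shows that $a_i^k a_j\cdot g$ has first syllable $a_i^k$ unless $g$ begins with $a_j^{-1}a_i^m\cdots$ for some $m\ne 0$, and $a_i^k a_j^{-1}\cdot g$ has first syllable $a_i^k$ unless $g$ begins with $a_j\,a_i^m\cdots$. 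These two failure conditions are mutually exclusive, so at least one multiplier always succeeds, yielding $F_n=\{a_j^{-1}a_i^{-k},\,a_j a_i^{-k}\}\cdot A_{i,k}$.

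For (ii), I would carry the construction of Theorem~3.1(ii) through on this syllable alphabet. Fix a nontrivial partition $\{1,\ldots,n\}=I\sqcup J$, set $X=\{a_i^k:i\in I,\ k\ne 0\}$ and $Y=\{a_j^k:j\in J,\ k\ne 0\}$, fix a bijection $X\leftrightarrow Y$ giving pairs $\{x_\alpha,y_\alpha\}_{\alpha<\aleph_0}$, and mirror the formulas of Theorem~3.1(ii), with first- and last-syllable playing the role of $\lambda$ and $\rho$, to define $L_\alpha$, $R_\alpha$, and $P_\alpha=L_\alpha\cup R_\alpha$. The case analysis from Theorem~3.1(ii) shows that the $P_\alpha$ are pairwise disjoint and exhaust $F_n\setminus\{e\}$. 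To verify that $L_\alpha$ is left $5$-large, split by whether the last syllable of $g$ lies in $X$ or in $Y$; in each subcase the two translators produced in (i) set the first syllable of $fg$ to $x_\alpha$ or $y_\alpha$ respectively, without disturbing the last syllable, so $fg\in L_\alpha$. The four translators together give left $5$-largeness; a symmetric argument gives right $5$-largeness of $R_\alpha$, so $P_\alpha=L_\alpha\cup R_\alpha$ is $5$-large.

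The main obstacle is the cancellation bookkeeping in (i): one must confirm that the pair of multipliers handles every $g$, including the degenerate cases where $fg=e$ or where cancellation between $f$ and $g$ cascades several syllables into $g$. Once (i) is settled, (ii) follows almost formally from the pairing recipe of Theorem~3.1(ii).
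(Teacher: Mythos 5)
Your construction follows the paper's own proof quite closely: the paper also passes to the syllable alphabet (via $\overline{\lambda}(g)=a^m$, the maximal leading power of a fixed letter), proves (i) from the leading\mbox{-}syllable invariant with a buffer letter in the translators, and proves (ii) by transplanting the first/last\mbox{-}syllable pairing of Theorem~3.1(ii). The differences --- you index (i) by all signed syllables $a_i^k$ rather than by the unsigned exponent of one fixed letter, you use two signed buffers where the paper uses the pair $\{a^m,a^mb\}$, and you treat all $n$ in (ii) directly where the paper does $F_2$ and then invokes Lemma~3.1 --- are cosmetic. Your part (i) is correct as written: the two failure patterns $a_j^{-1}a_i^m\cdots$ and $a_j a_i^m\cdots$ are indeed exclusive, and $g=e$ and the short words are handled.

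Part (ii) has one genuine defect. All four of your translators carry a buffer letter, and none of them belongs to $L_\alpha$: for instance $x_\alpha a_j$ (with $a_j\in Y$) has first syllable $x_\alpha\in X$ but last syllable in $Y$, violating the defining equivalence of $L_\alpha$. Consequently $g=e$ is covered by none of the four, and the claimed equality $F_n=F^{-1}L_\alpha$ fails at the identity. The paper sidesteps this by taking two of its four translators to be the bare powers $a^m$ and $b^m$, which lie in $L_m$ themselves and so cover $e$; moreover the bare translator $x_\alpha$ already succeeds for every $g$ whose first syllable is not a power of the target letter (no cancellation occurs at all), while $x_\alpha a_j$ handles the remaining $g$ with no sign\mbox{-}splitting needed, since $a_jg$ cannot cancel when $g$ begins with a power of the target letter. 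Replacing your pair $\{x_\alpha a_j,\,x_\alpha a_j^{-1}\}$ by $\{x_\alpha,\,x_\alpha a_j\}$, and likewise for $y_\alpha$, repairs the argument at the same cardinality, so the $5$\mbox{-}largeness survives. A second, smaller inaccuracy: ``without disturbing the last syllable'' is false for one of your two translators when $g$ consists of exactly two syllables $a_j^{\mp1}\cdot(\text{power of the target letter})$; but that translator is precisely the one whose first syllable gets spoiled, so this does not affect the conclusion once the translating set is corrected.
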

\begin{proof}
Given any $g\in F_n\setminus\{e\}$, $a\in n$ and $m\in\NN$, we write $\overline{\lambda}(g)=a^m$ 
($\overline{\rho}(g)=a^m$) if $g=a^{\pm m}h$, $\lambda(h)\neq a$ ($g=ha^{\pm m}$, $\rho(h)\neq a$).

$(i)$ We fix two distinct letters $a,b$ from $n$ and, for each $m\in\NN$, put
$$P_m=\{g\in F_n\setminus\{e\}:\overline{\lambda}(g)=a^m\}$$
Clearly, $\{a^m,a^mb\}g\cap P_m\neq\varnothing$ for each $g\in F_n$, so $P_m$ is let $3$-large.

$(ii)$ We suppose that $n=2$ and $F_2$ is a free group in the alphabet $\{a,b\}$. For every $m\in\NN$ we denote
$$L_m=\{g\in F_2\setminus\{e\}:\overline{\lambda}(g)=a^m\Leftrightarrow\overline{\rho}(g)=a,\text{ }\overline{\lambda}(g)=b^m\Leftrightarrow\overline{\rho}(g)=b\},$$
$$R_m=\{g\in F_2\setminus\{e\}:\overline{\rho}(g)=a^m\Leftrightarrow\overline{\lambda}(g)=b,\text{ }\overline{\rho}(g)=b^m\Leftrightarrow\overline{\lambda}(g)=a\}.$$
Then we put $P_m=L_m\cup R_m$ and note that the subsets $\{P_m:m\in\NN\}$ are pairwise disjoint.

Given any $g\in F_2$, we have $$\{a^m,b^m,a^mb,b^ma\}g\cap L_m\neq\varnothing,\text{ }g\{a^m,b^m,ba^m,ab^m\}\cap R_m\neq\varnothing.$$
Hence, $L_m$ is left $5$-large, $R_m$ is right $5$-large, so $P_m$ is $5$-large.

If $n>2$ then $F_2$ is a quotient of $F_n$ and we can apply Lemma~\ref{l3.1}.
\end{proof}
\begin{Rm}
For every $\kk>1$, one can easily choose a disjoint family $\{x_nH_n:n\in\w\}$ of cosets of $F_\kk$ by some subgroup of finite index.
Let $G$ be an arbitrary group, $H$ be a subgroup of finite index, $a\in G$, $b\in G$. 
The set $aHb$ is called a {\em shifted subgroup} of finite index.
Clearly, each shifted subgroup of finite index is $\aleph_0$-large.
We show that $|\PP|\le\aleph_0$ for any family $\PP$ of pairwise disjoint shifted subgroups of $G$ of finite index.
We denote by $N$ the intersection of all normal subgroups of finite index of $G$, and let $f:G\to G/N$ denotes the quotient mapping.
Let $H$ be the profinite completion of $G/N$ and let $\mu$ be the Haar measure on $H$.
We observe that the subsets $\{cl_H f(P):P\in\PP\}$ are pairwise disjoint and $\mu(cl_H f(P))>0$ for each $P\in\PP$.
It follows that $|\PP|\le\aleph_0$.
\end{Rm}

\section{On partitions of $G$-spaces}\label{s4}
Let $G$ be a group and let $X$ be a transitive $G$-space with the action $G\times X\to X$, $(g,x)\mapsto gx$.
For a cardinal $\lambda$, a subset $Y$ of $X$ is called {\em $\lambda$-large} if there exists $F\in[G]^{<\lambda}$ such that $X=FY$.
\begin{Ex}
Let $\kk$ be an infinite cardinal, $X=\kk$ and let $G$ denotes the group of all permutations of $X$.
Let $Y$ be a subset of $X$ such that $|Y|=|X\setminus Y|$. We take a permutation $f\in G$ such that $f(Y)=X\setminus Y$, $f(X\setminus Y)=Y$.
Then $X=\{e,f\}Y$. It follows that $X$ can be partitioned into $\kappa$ $3$-large subsets.
\end{Ex}
\begin{Ex}
Let $\kk$ be an infinite cardinal, $X=\kk$ and let $G$ denotes the group of all permutations of $X$ with finite support.
Let $Y$ be a subset of $G$ such that $|X\setminus Y|=\kk$. We take an arbitrary $F\in[G]^{<\kk}$ and note that $(X\setminus Y)\setminus FY\neq\varnothing$.
It follows that $Y$ is not $\kk$-large. Hence, $X$ cannot be partitioned even into two $\kk$-large subsets.
\end{Ex}
\section{Comments and open questions}\label{s5}
$1$. We begin with two open question arising naturally from the results of the paper.
\begin{Qs}
Can every infinite group $G$ of cardinality $\kk$ be partitioned into $\kk$ $\aleph_1$-large subsets?
\end{Qs}
\begin{Qs}
For an infinite cardinal $\kk$, can the free group $F_\kk$ be partitioned into $\kk$ $3$-large subsets?
\end{Qs}
\begin{Qs}
For a natural number $n\ge2$, can the free group $F_n$ be partitioned into $\aleph_0$ $4$-large subsets?
\end{Qs}
$2.$ We do not know \cite[Problem 4.2]{b7} whether every infinite group $G$ of cardinality $\kk$ can be partitioned into $\kk$ $\kk$-large subsets.

$3.$ For every $n\in\NN$, there is a (minimal) natural number $\Phi(n)$ such that, for every group $G$ and every partition 
$G=A_1\cup\dots\cup A_n$ there exists a cell $A_i$ and a finite subset $F$ of $G$ such that $G=FA_iA_i^{-1}$ and $|F|\le\Phi(n)$.
So far it is an open problem \cite[Problem 13.44]{b4} whether $\Phi(n)=n$. For nowadays state of this problem see the survey \cite{b1}.

$4.$ In \cite{b9} we conjectured that every infinite group $G$ of cardinality $\kk$ can be partitioned 
$G=\bigcup_{n\in\w}A_n$  such that each subset $A_nA_n^{-1}$ is not left $\kk$-large.
We confirmed this conjecture for each group of regular cardinality and of some groups (in particular, Abelian) of an arbitrary cardinality.

$5.$ Every infinite group $G$ of regular cardinality $\kk$ can be partitioned $G=A_1\cup A_2$ such that $A_1$ and $A_2$ are not left $\kk$-large.
In \cite{b10} we show that this statement fails to be true for every Abelian group of singular cardinality $\kk$.

Department of Cybernetics,
\\
Kyiv University,
\\
Volodimirska 64,
\\
01033 Kyiv, Ukraine
\\
{\em Email address}: i.v.protasov@gmail.com
\\\\
Department of Mechanics and Mathematics,
\\
Kyiv University,
\\
Volodimirska 64,
\\
01033 Kyiv, Ukraine
\\
{\em Email address}: slobodianiuk@yandex.ru
\end{document}